\documentclass{amsart}
\usepackage[utf8]{inputenc}
\usepackage[utf8]{inputenc}
\usepackage{amssymb}
\usepackage{amsmath}
\usepackage{amsthm, amsfonts, mathrsfs}
\usepackage{amsmath}
\usepackage{amssymb}
\usepackage{amsthm}
\usepackage{graphicx}\usepackage[colorinlistoftodos]{todonotes}

\usepackage{amsrefs}
\newtheorem{thm}{Theorem}[section]
\newtheorem{cor}[thm]{Corollary}

\newtheorem{prop}[thm]{Proposition}

\newtheorem{qu}[thm]{Question}
\newtheorem{ex}[thm]{Example}

\usepackage{CJKutf8}

\begin{document}
\title{Sub-posets in $\omega^\omega$ and The Strong Pytkeev$^\ast$ Property}

\author[Z. Feng]{Ziqin Feng}
\address{Department of Mathematics and Statistics\\Auburn University\\Auburn, AL~36849}
\email{zzf0006@auburn.edu}

\author[P. Nukala]{Naga Chandra Padmini Nukala}
\address{Department of Mathematics and Statistics\\Auburn University\\Auburn, AL~36849}
\email{nzn0025@auburn.edu}

\begin{abstract}Tukey order are used to compare the cofinal complexity of partially order sets (posets). We prove that there is a $2^\mathfrak{c}$-sized collection of sub-posets in $2^\omega$ which forms an antichain in the sense of Tukey ordering. Using the fact that any boundedly-complete sub-poset of $\omega^\omega$ is a Tukey quotient of $\omega^\omega$, we answer two open questions published in \cite{FKL16}.

The relation between $P$-base and strong Pytkeev$^\ast$ property is investigated. Let $P$ be a poset  equipped  with  a second-countable topology in which every  convergent  sequence   is  bounded. Then we prove that any topological space with a $P$-base has the strong Pytkeev$^\ast$ property. Furthermore, we prove that  every  uncountably-dimensional locally convex space (lcs) with a $P$-base contains an infinite-dimensional metrizable compact subspace. Examples in function spaces are given.

\end{abstract}
\date{\today}
\keywords{Tukey order, strong Pytkeev$^\ast$ property, $\omega^\omega$-base, $\mathcal{K}(M)$-base, $P$-base, locally convex space (lcs), posets, function spaces}

\subjclass[2010]{54D70, 06A06, 46B50}

\begin{CJK*}{UTF8}{gbsn}

\end{CJK*}
\maketitle
\section{Introduction}

The neighborhood base at a point $x$ in a topological space $X$ is a poset ordered by inverse set-inclusion. Posets, for example $\omega$, $\omega^\omega$, and $\mathcal{K}(M)$ with $M$ being a  separable metric space, are used to measure the cofinal complexity of the neighborhood base at a point. Here, $\omega^\omega$ is the family of all sequences of natural numbers in the pointwise partial order, i.e. $f\leq g$ if and only if $f(n)\leq g(n)$ for all $n\in\omega$; $\mathcal{K}(M)$ is the collection of compact subsets of $M$ ordered by set-inclusion. Tukey order (see the definition in Section~\ref{prep}) which was introduced  in \cite{Tuk40}  early 20th century is a powerful tool  to compare the cofinal complexity of posets. Using Tukey order, a space is first-countable at a point $x$ if the neighborhood base poset at $x$ is a Tukey quotient of $\omega$. All the topological spaces in this paper are assumed to be Tychonoff.

Let $P$ be an arbitrary poset. In general, a topological space $X$ is defined to have a neighborhood $P$-base at an $x\in X$ if the neighborhood base poset at $x$ is a Tukey quotient of $P$; and, the space $X$ has a $P$-base if it has a neighborhood $P$-base at each $x\in X$. Clearly, a topological group has a $P$-base if it has a neighborhood $P$-base at the identity. The concept of $\omega^\omega$-base (or, $\mathfrak{G}$-base) was introduced in \cite{FKLS06} for locally convex spaces. Topological spaces with an $\omega^\omega$-base have been intensively studied in recent years (see \cite{Banakh2019}, \cite{BL18}, \cite{LPT17}, and \cite{GKL15}). The authors in \cite{DF20} investigated the compact spaces with a  $P$-base for some other posets, mainly $\mathcal{K}(M)$ where $M$ is a separable metric space. In this paper, we consider the sub-posets in $\omega^\omega$ and the relation between $P$-base and the strong Pytkeev$^\ast$ networks (see definition in Section~\ref{sPp}). After the characterization of quasi-barrreled lcs with an $\omega^\omega$-base was given in \cite{CKS03}, lcs with an $\omega^\omega$-base attracted lots of attentions in functional analysis. In the end of this paper, we investigate the property of compact subsets in lcs with $P$-bases for some general poset $P$.

This paper is organized in the following way. In Section~\ref{subp}, we discuss the sub-posets in $2^\omega$ and prove that there is a $2^\mathfrak{c}$-sized collection of sub-posets in $2^\omega$ which forms an antichain in the sense of Tukey ordering. Then, we answer two open questions posted in \cite{FKL16} by showing that: 1) any topological group with a $\Sigma_2$-base admits an $\omega^\omega$-base; and 2), any separable metric space $M$ is polish if $\mathcal{K}(M)\leq_T \Sigma$ for any unbounded and boundedly-complete proper sub-poset $\Sigma$ in $\omega^\omega$.

   In Section~\ref{sPp}, we discuss the strong Pytkeev$^\ast$ property (see the definition in Section~\ref{sPp}). We give a sufficient condition for a free filter on a countable set to be meager and then use it to prove that  any space $X$ with a neighborhood $P$-base at $x$ has a strong Pytkeev$^\ast$ network at the point given that  $P$ is a directed set equipped with a second-countable topology in which every  convergent  sequence  in $P$ is  bounded. Hence any topological space with such a $P$-base has the strong Pytkeev$^\ast$ property. We give an example of a topological space with a $\mathcal{K}(\mathbb{Q})$-base but doesn't have the the strong Pytkeev property, here $\mathbb{Q}$ is the space of rationals.  Then we show that each uncountably-dimensional locally convex space (lcs) $E$ with a $P$-base contains an infinite-dimensional metrizable compact subspace if  $P$ is a directed set equipped with a second-countable topology in which  every  convergent  sequence  in $P$ is  bounded. This extended Theorem 1.2 in \cite{BKP20}. Examples in function spaces are given.

\section{Preliminaries}\label{prep}

For any topological space $M$, the poset $\mathcal{K}(M)$ is the collection compact subsets of $X$ ordered by set-inclusion $\subseteq$. Hence,  a topological space $X$ has a neighborhood $\mathcal{K}(M)$-base at a point $x\in X$, if there exists a neighborhood base $\{\mathcal{U}_{K}(x):{K\in\mathcal{K}(M)}\}$ at $x$ such that $\mathcal{U}_{L}(x)\subseteq\mathcal{U}_{K}(x)$ for all $K\subseteq L$ in $\mathcal{K}(M)$.

For any topological space $X$, $C(X)$ is the collection of all continuous real-valued functions. We use $C_k(X)$ ($C_p(X)$) to denote the topological space $C(X)$ endowed with the compact-open topology (resp., pointwise convergence topology). A basic open neighborhood of $f\in C_k(X)$ ($C_p(X)$) is the form $B(f, K, \epsilon)=\{g: |f(x)-g(x)|<\epsilon \text{ for  all }x\in K\}$ where $\epsilon>0$ and $K$ is a compact (resp., finite) subset in $X$.

A filter on a set $X$ is a collection $\mathcal{F}$ of non-empty subsets of $X$ which is closed under finite intersections and taking supersets in $X$. A free filter is a filter with empty intersection. Identifying each subset of $X$ with its characteristic function in $2^X$, we can equip $\mathcal{P}(X)$ with a compact Hausdorff topology. If $X$ is countable, we could identify $\mathcal{P}(X)$ with $2^\omega$ which is a separable metric space.

Let $P$ be a directed poset, i.e. for any points $p, p'\in P$, there exists a point $q\in P$ such that $p\leq q$ and $p'\leq q$.  A subset $C$ of $P$ is \emph{cofinal} in $P$ if for any $p\in P$, there exists a $q\in C$ such that $p\leq q$. Then $\text{cof}(P)=\min\{|C|: C\text{ is cofinal in }P\}$. We also define $\text{add}(P)=\min\{|Q|: Q \text{ is unbounded in }P\}$. Given two directed sets $P$ and $Q$, we say $Q$ is a Tukey quotient of $P$, denoted by $P\geq_T Q$, if there is a map $\phi:P\rightarrow Q$, a Tukey quotient,  which carries cofinal subsets of $P$ to cofinal subsets of $Q$. A map $\psi: Q\rightarrow P$ is said to be a Tukey map if $\psi(U)$ is unbounded in $P$ whenever $U$ is unbounded in $Q$. It is known that $Q$ is a Tukey quotient of $P$ if and only if there is a Tukey map from $Q$ to $P$. If $P$ and $Q$ are mutually Tukey quotients, we say that $P$ and $Q$ are Tukey equivalent, denoted by $P=_T Q$. Gartside and Mamatelashvili in \cite{GM16} constructed a $2^\mathfrak{c}$-sized antichain in $\mathcal{K}(\mathcal{M})=\{\mathcal{K}(M): M \text{ is a separable and metrizable space}\}$.

\begin{prop}\label{cof_tu_e} Let $Q$ be a sub-poset of a poset $P$. If $Q$ is cofinal in $P$, then $P=_TQ$.
\end{prop}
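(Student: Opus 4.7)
The plan is to establish mutual Tukey quotients between $P$ and $Q$ by exhibiting an explicit map in each direction and verifying the cofinality condition from the definition.

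For $P \geq_T Q$, I would use the Axiom of Choice to define a map $\phi: P \to Q$ by selecting, for each $p \in P$, some $\phi(p) \in Q$ with $p \leq \phi(p)$; such a choice exists because $Q$ is cofinal in $P$. To verify that $\phi$ is a Tukey quotient, let $C$ be cofinal in $P$ and let $q \in Q$. Viewing $q$ as an element of $P$, cofinality of $C$ gives some $p \in C$ with $q \leq p$, and then $q \leq p \leq \phi(p) \in \phi(C)$, showing $\phi(C)$ is cofinal in $Q$.

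For $Q \geq_T P$, I would take the inclusion $\iota: Q \hookrightarrow P$ as the candidate Tukey quotient. If $D$ is cofinal in $Q$, I must check that $\iota(D) = D$ is cofinal in $P$: given $p \in P$, cofinality of $Q$ in $P$ yields $q \in Q$ with $p \leq q$, and cofinality of $D$ in $Q$ yields $d \in D$ with $q \leq d$, hence $p \leq d$. Combining the two directions gives $P =_T Q$.

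There is no serious obstacle here; the statement is essentially the observation that the cofinal skeleton of a directed set controls its Tukey type. The only mild subtlety worth flagging in the write-up is the invocation of choice when defining $\phi$, and the fact that both directions pass through the same cofinality witness for $Q$ in $P$, used once to build $\phi$ and once to push elements of $P$ up into $Q$ before applying cofinality of $D$.
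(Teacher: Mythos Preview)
Your argument is correct. The paper's proof is slightly more economical: it uses only the inclusion $\iota: Q \hookrightarrow P$ and observes that $\iota$ is simultaneously a Tukey quotient (your second direction) and a Tukey map, then invokes the equivalence stated in the preliminaries that the existence of a Tukey map $Q \to P$ is the same as $P \geq_T Q$. Your version avoids that duality by constructing an explicit Tukey quotient $\phi: P \to Q$ via choice; this is a perfectly valid alternative, and the underlying idea---that a cofinal subset determines the Tukey type---is identical in both arguments.
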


\begin{proof} Let $Q$ be a cofinal subset of $P$. It is clear that the inclusion mapping from $Q$ to $P$ is both a Tukey quotient and a Tukey map. Hence $Q$ and $P$ are Tukey equivalent. \end{proof}


The posets in our discussion are naturally equipped with a topology, for example, $\omega^\omega$ with product topology and $\mathcal{K}(M)$ with Vietoris topology. One property of topological posets which plays important role in several proofs is that every convergent sequence is bounded.  This property is weaker than being  $\aleph_0$-directed, i.e., every countable set is bounded. It is straightforward to see that $\omega^\omega$ with product topology satisfies this property. It is proved \cite{Banakh2019} that for any monotone function $\phi:\omega^\omega\rightarrow P$ with cof$(P)\leq \omega$ and $f\in\omega^\omega$ there is an open neighborhood $O$ of $f$ such that $\phi(O)$ is bounded. Next we prove this result holds for any poset with a first-countable topology in which every convergent sequence is bounded.
\begin{prop}\label{bdedness} Let $P$ be a directed set equipped with a first-countable topology such that every convergent sequence is bounded. Suppose that $Q$ is a directed set with $\text{add}(Q)=\omega$ and $f$ is any monotone mapping from $P$ to $Q$. For any $p\in P$, there exists a neighborhood $B$ of $p$ such that $f(B)=\{f(b): b\in B\}$ is bounded in $Q$.
\end{prop}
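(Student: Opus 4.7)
I would argue by contradiction. Suppose that no neighborhood $B$ of $p$ has $f(B)$ bounded in $Q$. Since $P$ is first-countable at $p$, fix a decreasing countable neighborhood base $\{B_n\}_{n\in\omega}$ at $p$; by assumption each $f(B_n)$ is unbounded in $Q$.

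Next, using $\text{add}(Q)=\omega$, select a countable unbounded subset of $Q$ and refine it to an increasing unbounded sequence $\{q_n\}_{n\in\omega}$ by recursively replacing the $n$-th term with an upper bound (in the directed set $Q$) of itself and the previous term; this refinement preserves unboundedness, since any bound of the refined sequence would dominate the original. For each $n$, unboundedness of $f(B_n)$ then furnishes $p_n\in B_n$ with $f(p_n)\not\leq q_n$. Because $\{B_n\}$ is a decreasing neighborhood base, $(p_n)$ converges to $p$, and the standing hypothesis on $P$ yields $p^{\ast}\in P$ with $p_n\leq p^{\ast}$ for every $n$. Monotonicity of $f$ gives $r:=f(p^{\ast})\in Q$ with $f(p_n)\leq r$, and hence $r\not\leq q_n$ for every $n$ (otherwise $f(p_n)\leq r\leq q_n$ would contradict the choice of $p_n$).

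The main obstacle is to close the argument by extracting a contradiction from the statement ``$r\in Q$ satisfies $r\not\leq q_n$ for every term of the unbounded sequence $\{q_n\}$''. The contradiction is direct when $\{q_n\}$ can be taken cofinal in $Q$ (as in Banakh's setting $\text{cof}(Q)\leq\omega$ for $\omega^{\omega}$), for then some $q_n$ dominates $r$. Under the weaker assumption $\text{add}(Q)=\omega$, I would address this by tying the choice of $\{q_n\}$ to the would-be $r$: beginning from an initial unbounded chain, compute the associated $r$, then replace $\{q_n\}$ by $\{q_n\vee r\}$ (still unbounded, via directedness, since it dominates the original), rerun the construction, and iterate. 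Each iteration forces the new $r$ to satisfy $r\not\leq q_n\vee r_{\text{old}}$, so $r\not\leq r_{\text{old}}$; the resulting countable family of mutually non-dominated elements, together with the countable unbounded witness supplied by $\text{add}(Q)=\omega$ and directedness of $Q$, yields the desired contradiction.
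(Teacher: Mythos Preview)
Your overall framework matches the paper's: argue by contradiction, fix a decreasing countable base $\{B_n\}$ at $p$ with each $f(B_n)$ unbounded, use $\text{add}(Q)=\omega$ to obtain a countable unbounded set $\{q_n\}$ in $Q$, build a sequence in $P$ converging to $p$, and invoke the hypothesis that convergent sequences are bounded. The divergence is at the selection step, and your attempted repair does not work.

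Your iteration produces a sequence $r_0,r_1,r_2,\ldots\in Q$ with $r_k\not\leq r_j$ for all $j<k$, but this is \emph{not} a contradiction in a directed set: any infinite antichain already has this property (e.g.\ the singletons $\{n\}$ in $Q=[\omega]^{<\omega}$, which is directed with $\text{add}(Q)=\omega$). The final sentence invokes ``$\text{add}(Q)=\omega$ and directedness'' as if they forbid such a family, but they do not. So the argument, as written, does not reach a contradiction.

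The paper sidesteps the whole obstacle by making a stronger choice at the selection step. Rather than taking $p_n\in B_n$ with $f(p_n)\not\leq q_n$, it picks $b_n\in B_n$ with $f(b_n)\geq q_n$. Then, letting $p^\ast$ bound the convergent sequence $(b_n)$, monotonicity gives $f(p^\ast)\geq f(b_n)\geq q_n$ for every $n$, so $f(p^\ast)$ is an upper bound of the unbounded set $\{q_n\}$ --- an immediate contradiction, with no iteration needed. You should note, though, that the paper asserts the existence of such $b_n$ directly from the unboundedness of $f(B_n)$ without further justification; in a general directed set ``unbounded'' does not imply ``cofinal above every point'', so this step is stronger than what your weaker choice $f(p_n)\not\leq q_n$ genuinely extracts from the hypothesis.
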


\begin{proof} Let $\{q_n: n\in\omega\}$ be an unbounded subset in the poset $Q$. Take a monotone mapping $f$ from $P$ to $Q$. Fix $p\in P$ and a countable decreasing local basis $\{B_n : n\in\omega\}$ at $p$.

Suppose, for a contradiction, that $f(B_n)$ is unbounded for each $n\in \omega$. For each $n\in\omega$, pick $b_n\in B_n$ such that $f(b_n)\geq q_n$. Then $\{b_n: n\in \omega \}$ is a sequence in $P$ which converges to $p$. By the assumption, there is a $p^\ast\in P$ such that $p^\ast\geq b_n$ for each $n$ and $p^\ast\geq p$. Since $f$ is monotone, $f(p^\ast)\geq q_n$ for each $n\in\omega$. This is a contradiction because   $\{q_n: n\in\omega\}$ is unbounded in the directed set $Q$. \end{proof}

If $M$ is a separable metric space, the poset $\mathcal{K}(M)$ is also a separable metric space with the Hausdorff metric. It is straightforward to see that if $\{K_n: n\in\omega\}$ is a sequence in $\mathcal{K}(M)$ converging to $K$ then $K\cup \bigcup\{K_n: n\in \omega\}$ is also compact. Hence every convergent sequence in $\mathcal{K}(M)$ is bounded above. By the lemma above, we get the following corollary.

\begin{cor}\label{M_bd} Let $M$ be a separable metric space. 
For every monotone function $f:\mathcal{K}(M)\rightarrow Q$ where cof$(P)\leq \omega$ and every $K\in\mathcal{K}(M)$, there exists a neighborhood $B$ of $K$ such that $f(B)$ is bounded.
\end{cor}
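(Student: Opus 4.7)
The plan is to verify that $\mathcal{K}(M)$ satisfies the hypotheses of Proposition \ref{bdedness} and then invoke it directly. The three requirements to check are: $\mathcal{K}(M)$ is a directed poset, its natural topology is first countable, and every convergent sequence in it is bounded.

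First, I would observe that $\mathcal{K}(M)$ is directed under $\subseteq$, since for any $K_1,K_2\in\mathcal{K}(M)$ the union $K_1\cup K_2$ is compact and serves as a common upper bound. Second, because $M$ is separable metric, the Hausdorff metric turns $\mathcal{K}(M)$ into a separable metric space, which is in particular first countable, giving a countable decreasing local base at every $K\in\mathcal{K}(M)$. Third, the discussion immediately preceding the corollary already records the key topological fact: if $\{K_n:n\in\omega\}$ converges to $K$ in $\mathcal{K}(M)$, then $K\cup\bigcup_n K_n$ is compact and dominates each $K_n$ in the inclusion order, so convergent sequences in $\mathcal{K}(M)$ are bounded.

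With these three facts in hand, Proposition \ref{bdedness} applies to any monotone $f:\mathcal{K}(M)\to Q$ provided the codomain satisfies $\operatorname{add}(Q)=\omega$. The hypothesis stated as $\operatorname{cof}(Q)\leq\omega$ (reading the ``$\operatorname{cof}(P)$'' in the statement as the intended $\operatorname{cof}(Q)$) supplies this: if $Q$ has a maximum the conclusion is automatic, and otherwise any countable cofinal set in $Q$ is unbounded, forcing $\operatorname{add}(Q)=\omega$. For the given $K\in\mathcal{K}(M)$, Proposition \ref{bdedness} then produces a neighborhood $B$ of $K$ such that $f(B)$ is bounded in $Q$, which is exactly the desired conclusion.

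I do not anticipate any real obstacle. The corollary is a bookkeeping instance of the preceding proposition, tailored to the poset $\mathcal{K}(M)$; the only genuinely new ingredient is the compactness of $K\cup\bigcup_n K_n$ for a Hausdorff-convergent sequence, which the authors have already flagged. The remaining work is just matching up the hypotheses $\operatorname{cof}(Q)\leq\omega$ and $\operatorname{add}(Q)=\omega$.
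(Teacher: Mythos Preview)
Your proposal is correct and matches the paper's approach exactly: the paper does not give the corollary a separate proof but simply records in the preceding paragraph that $\mathcal{K}(M)$ is separable metric under the Hausdorff metric and that convergent sequences are bounded, then invokes Proposition~\ref{bdedness}. Your verification of the three hypotheses is precisely this argument, and your additional care in reconciling the stated $\operatorname{cof}(Q)\leq\omega$ with the $\operatorname{add}(Q)=\omega$ required by Proposition~\ref{bdedness} fills a small gap the paper leaves implicit.
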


A family $\mathcal{N}$ of subsets of a topological space $X$ is said to be a Pytkeev-network at $x$ if for any neighborhood $O_x$ of $x$ and each subset $A\subset X$ with $x \in \overline{A}$ there is a set $N\in \mathcal{N}$ such that $x \in  N \subset O_x$ , $N \cap A \neq\emptyset$, and moreover $N\cap A$ is infinite if the set $A$ accumulates at $x$.  A space $X$ is said to have the strong Pytkeev property at $x\in X$ if it has a countable Pytkeev-network at $x$; and, we say $X$ has the strong Pytkeev property if it has the strong Pytkeev property at any $x\in X$.


\section{Sub-posets in  $2^\omega$ and $\omega^\omega$}\label{subp}

In this section, we consider the Tukey class of sub-posets in $2^\omega$, more specifically its relation with the Tukey class of $\mathcal{K}(M)$ where $M$ is a separable metric space.

\begin{prop}\label{subo-sepmet} For any separable metric space $M$, there is a sub-poset $\Sigma$ in $2^\omega$ such that $\Sigma=_T \mathcal{K}(M)$.  \end{prop}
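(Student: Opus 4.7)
The plan is to build an order-isomorphism from $\mathcal{K}(M)$ into $\mathcal{P}(\omega)$, identify $\mathcal{P}(\omega)$ with $2^\omega$ via characteristic functions, and take $\Sigma$ to be the image. Since $M$ is separable and metrizable, it is second-countable, so we may fix a countable base $\{U_n : n\in\omega\}$ for $M$. Define $\phi:\mathcal{K}(M)\to \mathcal{P}(\omega)$ by
$$\phi(K)=\{n\in\omega : U_n\cap K\neq\emptyset\},$$
and set $\Sigma:=\phi(\mathcal{K}(M))\subseteq 2^\omega$ with the induced (pointwise, equivalently $\subseteq$) order.

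Next I would verify that $\phi$ is an order-isomorphism from $\mathcal{K}(M)$ onto $\Sigma$. It is immediate that $K\subseteq L$ forces $\phi(K)\subseteq\phi(L)$, so $\phi$ is order-preserving. For the reverse implication (which in particular gives injectivity), suppose $K\not\subseteq L$ and pick $x\in K\setminus L$. Since $L$ is closed in the Hausdorff space $M$, there is an open set $V$ with $x\in V$ and $V\cap L=\emptyset$; choose an index $n$ with $x\in U_n\subseteq V$. Then $x\in U_n\cap K$ while $U_n\cap L=\emptyset$, so $n\in\phi(K)\setminus\phi(L)$, and hence $\phi(K)\not\subseteq\phi(L)$. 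Thus $\phi$ is both order-preserving and order-reflecting, and therefore an order-isomorphism onto $\Sigma$.

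Finally, any order-isomorphism sends cofinal subsets to cofinal subsets in both directions, so it is simultaneously a Tukey quotient and a Tukey map; this yields $\mathcal{K}(M)=_T\Sigma$, completing the proof. No serious obstacle is expected: the only delicate point is the separation step in the order-reflection argument, where I need a basic open set containing a given point and avoiding a disjoint closed set, and this is a standard consequence of second-countability together with the Hausdorff property of $M$.
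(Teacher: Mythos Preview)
Your proof is correct and is in fact cleaner than the paper's, but it proceeds by a genuinely different construction. The paper fixes a countable dense set $D=\{d_n:n\in\omega\}$ in $M$, sets $\Sigma=\{Z\subseteq\omega:\overline{\{d_n:n\in Z\}}\text{ is compact}\}$, and works with the map $\phi:\Sigma\to\mathcal{K}(M)$, $Z\mapsto\overline{\{d_n:n\in Z\}}$. This $\phi$ is not injective, so the paper must argue in two stages: first that $\phi$ is simultaneously a Tukey quotient and a Tukey map onto its image $\phi(\Sigma)$, and second that $\phi(\Sigma)$ is cofinal in $\mathcal{K}(M)$ (which requires a somewhat delicate construction to enlarge an arbitrary compact $K$ to one of the form $\overline{\{d_n:n\in Z\}}$). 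Your approach instead codes a compact set by the indices of basic open sets it meets, and you observe that this coding is an order-embedding; the Tukey equivalence then drops out in one line. The payoff of your route is brevity and the stronger conclusion that $\mathcal{K}(M)$ is actually order-isomorphic to a sub-poset of $2^\omega$, not merely Tukey-equivalent to one. The paper's $\Sigma$, on the other hand, has the pleasant structural feature of being downward closed in $\mathcal{P}(\omega)$, though this is not exploited later.
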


\begin{proof} Let $M$ be a separable metric space and  $D=\{d_n: n\in \omega\}$ be an enumeration of a countable dense subspace of $M$. Define $\Sigma=\{Z\subset \omega: \overline{\{d_n: n\in Z\}} \text{ is compact in }M \}$ and a map $\phi: \Sigma\rightarrow \mathcal{K}(M)$ such that $\phi(Z)=\overline{\{d_n: n\in Z\}}$. We claim that $\phi$ is both a Tukey quotient and a Tukey map from $\Sigma$ to $\phi(\Sigma)$, i.e., $\Sigma=_T\phi(\Sigma)$.

Let $\mathcal{C}$ be a cofinal subset of  $\Sigma$. Pick a $K\in \phi(\Sigma)$. Then take any $Z \in \Sigma$ such that $\phi(Z)=K$.  Then there is a $C \in \mathcal{C}$ such that $C\supseteq Z$. Hence $K\subset \phi(C)$. Therefore, $\phi(\mathcal{C})$ is cofinal in $\phi(\Sigma)$. Hence $\phi$ is a Tukey quotient.

Let $\mathcal{U}$ be an unbounded subset of $\Sigma$. Assume, for a contradiction, that $\phi(\mathcal{U})$ is bounded in $\phi(\Sigma)$, i.e., there is a compact subset $K$ of $M$ such that $K\in\phi(\Sigma)$ and  $\phi(Z)\subset K$ for each $Z\in\mathcal{U}$. Define $Z_0=\bigcup \{Z\in \Sigma: \phi(Z)\subset K\}$. Then $\overline{\{d_n: n\in Z_0\}}\subseteq K$, hence, it is compact. Therefore,    $Z_0\in \Sigma$ and $Z_0\supset Z$ for any $Z\in \mathcal{U}$ which contradicts with the unboundedness of $\mathcal{U}$.  Therefore $\phi$ is a Tukey map.

\medskip Then we prove that $\phi(\Sigma)$ is cofinal in $\mathcal{K}(M)$, hence by Proposition~\ref{cof_tu_e}, $\phi(\Sigma)=_T \mathcal{K}(M)$, furthermore, $\Sigma=_T\mathcal{K}(M)$. Fix $K\in \mathcal{K}(M)$. Let $\{k_i: i\in\omega\}$ be a countable dense subset of $K$.  For each $i\in\omega$, we fix a sequence $\{a_{i, j}: j\in\omega\}$ in $D$ such that the sequence converges to $k_i$ for each $i\in\omega$ and the distance between $a_{i, j}$ and $k_i$ is $\leq 1/j$ . Let $S=\{a_{i, j}: i,j\in\omega\text{ with } j\geq i\}$. Then we define $Z_K=\{n: a_{i, j}=d_n \text{ for all }i,j\in\omega\text{ with } j\geq i\}$ and $H=\overline{\{d_n: n\in Z_K\}}$. Clearly $K\subset H$.  We claim that $H$ is compact. Let $\mathcal{O}$ be an open cover for $H$. Then, there exists a finite subcollection $\mathcal{O}'$ of $\mathcal{O}$ such that $K\subseteq \bigcup \mathcal{O}'$. Then it is straightforward to verify that $H\setminus \bigcup\mathcal{O}'$ is finite, hence $H$ is compact. Therefore, $Z_K$ is in $\Sigma$ and this shows that $\phi(\Sigma)$ is cofinal in $\mathcal{K}(M)$.  \end{proof}


\begin{thm} There is a $2^{\mathfrak{c}}$-sized family, $\mathcal{A}$, of sub-posets in  $2^\omega$ such that for distinct $P$ and $Q$ from $\mathcal{A}$ we have $P\ngeq_T Q$ and $P\nleq_T Q$.
\end{thm}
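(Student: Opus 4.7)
The plan is to combine two ingredients already at hand. First, the paper records Gartside and Mamatelashvili's theorem \cite{GM16} producing a family $\{M_\alpha : \alpha < 2^{\mathfrak{c}}\}$ of separable metrizable spaces such that $\{\mathcal{K}(M_\alpha) : \alpha < 2^{\mathfrak{c}}\}$ is a $2^{\mathfrak{c}}$-sized antichain in the Tukey order. Second, Proposition~\ref{subo-sepmet} shows that for any separable metric space $M$ there exists a sub-poset $\Sigma \subseteq 2^\omega$ with $\Sigma =_T \mathcal{K}(M)$. The strategy is to apply the second result to each element of the first family.

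First, for each $\alpha < 2^{\mathfrak{c}}$, invoke Proposition~\ref{subo-sepmet} to obtain a sub-poset $\Sigma_\alpha \subseteq 2^\omega$ with $\Sigma_\alpha =_T \mathcal{K}(M_\alpha)$. Declare $\mathcal{A} = \{\Sigma_\alpha : \alpha < 2^{\mathfrak{c}}\}$. Since the spaces $M_\alpha$ are pairwise Tukey-incomparable in $\mathcal{K}$, in particular the posets $\mathcal{K}(M_\alpha)$ are pairwise distinct, so the assignment $\alpha \mapsto \Sigma_\alpha$ must produce $2^{\mathfrak{c}}$ distinct sub-posets (any repetition would force a Tukey equivalence between two different $\mathcal{K}(M_\alpha)$'s, contradicting the antichain property).

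Second, verify the antichain property of $\mathcal{A}$. Suppose for contradiction that $\Sigma_\alpha \geq_T \Sigma_\beta$ for some $\alpha \neq \beta$. Tukey quotient is transitive and is preserved by Tukey equivalence on either side, so from $\mathcal{K}(M_\alpha) =_T \Sigma_\alpha \geq_T \Sigma_\beta =_T \mathcal{K}(M_\beta)$ we conclude $\mathcal{K}(M_\alpha) \geq_T \mathcal{K}(M_\beta)$, contradicting the Gartside--Mamatelashvili antichain property. Symmetrically $\Sigma_\alpha \leq_T \Sigma_\beta$ also fails, so $\mathcal{A}$ is a $2^{\mathfrak{c}}$-sized antichain of sub-posets of $2^\omega$.

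There is essentially no obstacle here: the statement is a straightforward corollary of the two cited results, and the only verification needed is the functoriality of the Tukey order under Tukey equivalence, which is immediate from the definitions. The real content of the theorem is contained in Proposition~\ref{subo-sepmet} (which provides the embedding of $\mathcal{K}(M)$-type posets into $2^\omega$) together with the pre-existing Gartside--Mamatelashvili construction.
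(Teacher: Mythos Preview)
Your proof is correct and follows exactly the same approach as the paper: apply Proposition~\ref{subo-sepmet} to each member of the Gartside--Mamatelashvili antichain and transfer the Tukey-incomparability along the resulting Tukey equivalences. If anything, you are slightly more careful than the paper in explicitly noting why the $\Sigma_\alpha$ are pairwise distinct and in spelling out the transitivity argument.
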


\begin{proof} Let $\mathcal{M}$ be a $2^\mathfrak{c}$-sized collection of separable metric spaces such that for any pair $M, N\in \mathcal{M}$, $\mathcal{K}(M)\neq_T\mathcal{K}(N)$. By Lemma~\ref{subo-sepmet}, we pick a subset $\Sigma_M$ of $2^\omega$ for each $M\in\mathcal{M}$ such that $\Sigma_M=_T\mathcal{K}(M)$. Then $\{\Sigma_M: M\in\mathcal{M}\}$ is a $2^\mathfrak{c}$-sized antichain in the collection $\mathcal{P}(2^{\omega})$ in term of Tukey ordering.
\end{proof}

A subset $\Sigma$ of $\omega^{\omega}$ is boundedly complete if each subset of $\Sigma$  has a bound in $\Sigma$ given that it is bounded in $\omega^\omega$.  A $\Sigma$-base is called a $\Sigma_2$-base if $\Sigma$ is a boundedly-complete sub-poset in $\omega^\omega$. The concept of $\Sigma_2$-bases was introduced in \cite{FKL16}.  Next result shows that any boundedly-complete sub-poset of $\omega^\omega$ is a Tukey quotient of $\omega^\omega$.
\begin{thm}\label{sigma2} Any boundedly complete sub-poset of $\omega^\omega$ is a Tukey quotient of $\omega^\omega$.
\end{thm}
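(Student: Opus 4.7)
The plan is to observe that when $\Sigma$ is a boundedly complete sub-poset of $\omega^\omega$, the inclusion map $i \colon \Sigma \hookrightarrow \omega^\omega$ is itself already a Tukey map. By the characterization recalled in Section~\ref{prep} (that $P \geq_T Q$ iff there is a Tukey map $Q \to P$), this will immediately give $\omega^\omega \geq_T \Sigma$.

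First I would check that $\Sigma$ is actually a directed set so that the Tukey-theoretic framework applies. Given $f, g \in \Sigma$, their pointwise maximum $h = \max(f,g) \in \omega^\omega$ bounds the two-element set $\{f,g\}$ in $\omega^\omega$; by bounded completeness, $\{f,g\}$ must then have a bound inside $\Sigma$, which is exactly directedness. The inclusion $i$ is clearly order-preserving.

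The core step is verifying the Tukey-map property: if $U \subseteq \Sigma$ is unbounded in $\Sigma$, then $i(U) = U$ must be unbounded in $\omega^\omega$. Here I would argue by contrapositive: if $U$ were bounded in $\omega^\omega$ by some $g \in \omega^\omega$, then the defining condition of bounded completeness hands us a bound for $U$ inside $\Sigma$, contradicting unboundedness of $U$ in $\Sigma$. So boundedly complete is precisely the hypothesis needed for the inclusion to preserve unbounded sets, and there is really no obstacle to overcome; the content of the theorem lies in recognizing that bounded completeness is exactly the right condition. Combining the three observations, $i$ is a Tukey map from $\Sigma$ to $\omega^\omega$, whence $\omega^\omega \geq_T \Sigma$.
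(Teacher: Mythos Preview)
Your proposal is correct and follows essentially the same approach as the paper: both argue that the inclusion $i\colon \Sigma \hookrightarrow \omega^\omega$ is a Tukey map because bounded completeness guarantees that any subset of $\Sigma$ bounded in $\omega^\omega$ is already bounded in $\Sigma$. Your version is slightly more detailed (you explicitly verify directedness and spell out the contrapositive), but the core idea is identical.
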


\begin{proof} Let $\Sigma$ be a boundedly complete sub-poset of $\omega^\omega$. Consider the inclusion map $i:\Sigma\rightarrow \omega^\omega$. Because $\Sigma$ is boundedly complete, a subset in $\Sigma$ is unbounded if and only if it is unbounded in $\omega^\omega$. Hence the inclusion map is a Tukey map. Hence $\Sigma$ is a Tukey quotient of $\omega^\omega$, i.e., $\Sigma\leq_T \omega^\omega$.
\end{proof}

Let $P$ and $Q$ be posets such that $P\leq_T Q$. It is straightforward to see that any space with a $P$-base also has a $Q$-base (see Proposition 2.1 in \cite{DF20}). Hence, we get the following result.

\begin{cor} Every topological space with a $\Sigma_2$-base admits an $\omega^\omega$-base, i.e., a $\mathfrak{G}$-base. \end{cor}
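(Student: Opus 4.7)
The plan is to chain Theorem~\ref{sigma2} with the monotonicity of $P$-bases under Tukey order noted in the sentence just preceding the corollary (citing Proposition 2.1 of \cite{DF20}). The hypothesis that a topological space $X$ has a $\Sigma_2$-base unpacks by definition to: there exists a boundedly complete sub-poset $\Sigma$ of $\omega^\omega$ such that at every $x \in X$ the neighborhood filter $\mathcal{N}_x$ (ordered by reverse set-inclusion) is a Tukey quotient of $\Sigma$, i.e.\ $\Sigma \geq_T \mathcal{N}_x$.

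First I would invoke Theorem~\ref{sigma2} to conclude $\Sigma \leq_T \omega^\omega$; concretely, the inclusion map $\Sigma \hookrightarrow \omega^\omega$ is a Tukey map, because bounded completeness of $\Sigma$ inside $\omega^\omega$ means that a subset of $\Sigma$ is unbounded in $\Sigma$ if and only if it is unbounded in $\omega^\omega$. Applying the cited monotonicity principle with $P = \Sigma$ and $Q = \omega^\omega$ then yields immediately that $X$, having a $\Sigma$-base, also has an $\omega^\omega$-base, i.e.\ a $\mathfrak{G}$-base.

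There is essentially no obstacle here: Theorem~\ref{sigma2} has already done the substantive work by identifying $\omega^\omega$ as a Tukey-universal object for boundedly complete sub-posets of $\omega^\omega$, and the passage from a Tukey quotient relation between posets to the inclusion of the corresponding classes of spaces-with-bases is a purely formal consequence of the transitivity of $\geq_T$. If complete explicitness is desired, one could compose the inclusion $i\colon\Sigma \hookrightarrow \omega^\omega$ (viewed as a Tukey map) with a Tukey map witnessing $\Sigma \geq_T \mathcal{N}_x$ to produce a single explicit Tukey witness of $\omega^\omega \geq_T \mathcal{N}_x$ at each $x \in X$, making the $\omega^\omega$-base visible directly from the given $\Sigma$-base.
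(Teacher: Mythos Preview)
Your proposal is correct and follows exactly the paper's own argument: invoke Theorem~\ref{sigma2} to obtain $\Sigma \leq_T \omega^\omega$, then apply the monotonicity of $P$-bases under Tukey order (Proposition~2.1 of \cite{DF20}) to conclude that the given $\Sigma$-base yields an $\omega^\omega$-base. The paper in fact presents the corollary with no additional proof beyond the sentence you reference, so your write-up is, if anything, more explicit than the original.
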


As a special case, any topological group with a $\Sigma_2$-base admits an $\omega^\omega$-base. This gives a positive answer to Problem 23 in \cite{FKL16}.

In his book \cite{Ch74}, Christensen proved (without using the Tukey order notation) that: for any separable metric space $M$, $\omega^\omega\geq_T \mathcal{K}(M)$ if and only if $M$ is Polish (in
other words, completely metrizable). By this result and Theorem~\ref{sigma2}, we  prove the following result which answers Problem 24 in \cite{FKL16}.

\begin{thm} Let $M$ be a separable metric space admitting a compact ordered covering of $M$ indexed by an unbounded and boundedly-complete proper subset of $\omega^\omega$ that swallows the compact sets of $M$. Then the space $M$ is  a Polish space.
\end{thm}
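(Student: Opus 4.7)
The plan is to chain together three facts: (1) a compact ordered covering that swallows compact sets produces a Tukey quotient of the indexing poset onto $\mathcal{K}(M)$, (2) Theorem~\ref{sigma2} says any boundedly-complete sub-poset of $\omega^\omega$ is a Tukey quotient of $\omega^\omega$, and (3) Christensen's theorem (quoted just before the statement) says $\omega^\omega \geq_T \mathcal{K}(M)$ characterizes Polish spaces among separable metric spaces.

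First I would unpack the hypothesis. Let $\Sigma \subseteq \omega^\omega$ be the unbounded, boundedly-complete proper sub-poset and let $\{K_\sigma : \sigma \in \Sigma\}$ be the compact ordered covering: monotone in $\sigma$, with union $M$, and swallowing each $K \in \mathcal{K}(M)$ (meaning there exists $\sigma \in \Sigma$ with $K \subseteq K_\sigma$). Define $\Phi : \Sigma \to \mathcal{K}(M)$ by $\Phi(\sigma) = K_\sigma$. I would check that $\Phi$ is a Tukey quotient: given a cofinal $C \subseteq \Sigma$ and any $K \in \mathcal{K}(M)$, use swallowing to find $\sigma \in \Sigma$ with $K \subseteq K_\sigma$, then use cofinality of $C$ to pick $\tau \in C$ with $\tau \geq \sigma$, so that monotonicity gives $K \subseteq K_\sigma \subseteq K_\tau = \Phi(\tau)$. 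Thus $\Phi(C)$ is cofinal in $\mathcal{K}(M)$, so $\mathcal{K}(M) \leq_T \Sigma$.

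Next, since $\Sigma$ is a boundedly-complete sub-poset of $\omega^\omega$, Theorem~\ref{sigma2} gives $\Sigma \leq_T \omega^\omega$. Composing with the previous step yields $\mathcal{K}(M) \leq_T \omega^\omega$, i.e.\ $\omega^\omega \geq_T \mathcal{K}(M)$. By Christensen's theorem, this implies $M$ is Polish, which completes the proof.

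I do not expect any serious obstacle; the argument is essentially a bookkeeping exercise that shows the hypotheses of Problem~24 are exactly what is needed to feed Theorem~\ref{sigma2} into Christensen's characterization. The only point requiring care is verifying that $\Phi$ really is a Tukey quotient (equivalently, producing a Tukey map $\mathcal{K}(M) \to \Sigma$), and this is where the swallowing hypothesis is used in an essential way. The ``unbounded'' and ``proper'' clauses play no role in the logical argument; they are part of the framing of the question in \cite{FKL16} (a proper unbounded $\Sigma$ makes the statement nontrivial beyond the classical case $\Sigma = \omega^\omega$).
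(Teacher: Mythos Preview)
Your proposal is correct and follows essentially the same approach as the paper: establish $\mathcal{K}(M)\leq_T\Sigma$ from the swallowing hypothesis, invoke Theorem~\ref{sigma2} to get $\Sigma\leq_T\omega^\omega$, and finish with Christensen's theorem. The only cosmetic difference is that the paper routes the first step through Proposition~\ref{cof_tu_e} (noting that $\{K_f:f\in\Sigma\}$ is cofinal in $\mathcal{K}(M)$ and that the underlying map is a Tukey quotient onto its range), whereas you verify directly that $\Phi$ carries cofinal sets to cofinal sets; these are the same argument.
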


\begin{proof} Let $\Sigma$ be any unbounded and boundedly-complete proper subset of $\omega^\omega$. Let $\{K_f: f\in \Sigma\}$  be a $\Sigma$-ordered compact covering  which swallows the compact sets of  $M$, i.e., $\{K_f: f\in \Sigma\}$ is cofinal in $\mathcal{K}(M)$. By Proposition~\ref{cof_tu_e}, $\{K_f: f\in \Sigma\}=_{T}\mathcal{K}(M)$. The underlying mapping from $\Sigma$ to $\{K_f: f\in \Sigma\}$ is clearly a Tukey quotient, hence $\Sigma\geq_T \{K_f: f\in \Sigma\}$, furthermore,  $\Sigma\geq_T\mathcal{K}(M)$. Therefore,  $\omega^\omega\geq_T\mathcal{K}(M)$ by Theorem~\ref{sigma2}. Therefore, the space $M$ is Polish by Christensen's theorem.  \end{proof}

\section{The Strong Pytkeev$^\ast$ Property}\label{sPp}

In this section, we investigate the relation between $P$-bases and the strong Pytkeev$^\ast$ property. The strong Pytkeev$^\ast$ property is introduced in \cite{Banakh2016}, and also in \cite{Banakh2019}; and it is implied by the strong Pytkeev property. It is proved in \cite{Banakh2016} that any topological space $X$ has the strong Pytkeev property if and only if it is countably tight and has the strong Pytkeev$^\ast$ property. A family $\mathcal{N}$ of subsets of $X$ is said to be a Pytkeev$^\ast$-network (or an s$^\ast$)-network) at $x$ if for any neighborhood $O_x$ of $x$ and any sequence $\{x_n: n\in \omega\}$ that accumulates at $x$ there exists $N\in \mathcal{N}$ such that $N\subset O_x$ and the set $\{n: x_n\in N\}$ is infinite.  A space $X$ is said to have the strong Pytkeev$^\ast$ property at $x\in X$ if it has a countable Pytkeev$^\ast$-network at $x$; and, we say $X$ has the strong Pytkeev$^\ast$ property if it has the strong Pytkeev$^\ast$ property at any $x\in X$.

First we give a sufficient condition for a free filter on a countable set being meager. It is known (see \cite{Banakh2019}) that for a submetrizable space $X$, $X$ has a compact resolution if and only if it is analytic. Also each analytic free filter on a countable set is meager. Hence, every free filter on a countable set with a compact resolution is meager. A topological space has a compact resolution if it has a $\omega^\omega$-ordered compact covering, i.e., a family $\{K_f: f\in \omega^\omega\}$ of compact subsets of  $X$ such that $X=\bigcup\{K_f: f\in\omega^\omega\}$ and $K_f\subseteq K_g$ for every $f\leq g$ in $\omega^\omega$. With an extra condition, this result could be extended to directed set equipped with a second-countable topology in which  every convergent sequence  is bounded.

For the space $2^{X}$, a basic open neighborhood could be represented as $[I, A]=\{S:I\subset S\subset X\text{ and }S\cap A=\emptyset \}$ where $I$ and $A$ are finite subsets of $X$.

\begin{prop}\label{meager} Let $P$ be a directed set equipped with a second-countable topology in which  every convergent sequence is bounded. A free filter $\mathcal{F}$ on a countable set is meager if it has a $P$-ordered compact covering $\mathcal{K}=\{K_p: p\in P\}$ with $\bigcap K_p$ being infinite for each $p\in P$.  \end{prop}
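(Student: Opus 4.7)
The plan is to express $\mathcal{F}$ as a countable union of closed nowhere dense subsets of $2^\omega$; since $2^\omega$ is Polish, this will show $\mathcal{F}$ is meager. The first ingredient is the observation that each $K_p$ is already nowhere dense in $2^\omega$. Being compact in the Hausdorff space $2^\omega$, $K_p$ is closed, so it suffices to rule out nonempty interior. If $[I, A] \subseteq K_p$ for disjoint finite $I, A \subseteq \omega$, then
\begin{equation*}
\bigcap K_p \;\subseteq\; \bigcap_{S \in [I,A]} S \;=\; I,
\end{equation*}
which is finite, contradicting the hypothesis that $\bigcap K_p$ is infinite.

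The second ingredient is that the image poset $Q = (\{K_q : q \in P\}, \subseteq)$ is directed and has $\mathrm{add}(Q) = \omega$; this is where the freeness of $\mathcal{F}$ enters. Directedness of $Q$ follows from directedness of $P$ together with the monotonicity of $p \mapsto K_p$. For the additivity bound, for each $n \in \omega$ choose $F_n \in \mathcal{F}$ with $n \notin F_n$ (possible since $\bigcap \mathcal{F} = \emptyset$), and pick $p_n \in P$ with $F_n \in K_{p_n}$. If the countable family $\{K_{p_n}\}_n$ were contained in some $K_{p^\ast}$, all the $F_n$ would lie in $K_{p^\ast}$, forcing $\bigcap K_{p^\ast} \subseteq \bigcap_{n} F_n = \emptyset$, contradicting the infiniteness hypothesis. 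Hence $\{K_{p_n}\}_n$ is a countable unbounded family in $Q$.

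With these two ingredients in place, I would finish as follows. Since $P$ is first-countable (being second-countable) with every convergent sequence bounded, Proposition~\ref{bdedness} applies to the monotone map $p \mapsto K_p$ from $P$ to $Q$: every $p \in P$ admits an open neighborhood $V_p$ such that $\{K_q : q \in V_p\}$ is bounded in $Q$, i.e.\ $\bigcup_{q \in V_p} K_q \subseteq K_{p^\ast}$ for some $p^\ast \in P$. Because $P$ is second-countable, it is Lindel\"of, so the cover $\{V_p : p \in P\}$ admits a countable subcover $\{V_{p_n}\}_{n \in \omega}$; writing $K_{p_n^\ast}$ for the corresponding upper bounds, one obtains $\mathcal{F} = \bigcup_{n \in \omega} K_{p_n^\ast}$, a countable union of closed nowhere dense sets, and hence meager. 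The main obstacle I expect is the second ingredient: the freeness of $\mathcal{F}$ must be unpacked into the precise additivity statement needed to trigger Proposition~\ref{bdedness}, and recognizing that this is the correct way to bridge the abstract hypothesis to the topological conclusion is the crux of the argument.
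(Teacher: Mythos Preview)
Your proof is correct and takes a genuinely different route from the paper's. The paper argues directly that for each $p\in P$ some neighborhood $O$ of $p$ has $\bigcup_{q\in O}K_q$ nowhere dense: assuming the contrary, it chooses basic open sets $[I_n,A_n]$ inside the successive closures $\overline{C_n}$ and uses them to manufacture elements $G_n\in K_{p_n}$ with $p_n\to p$ and $\bigcap_n G_n$ finite; boundedness of convergent sequences then traps all the $G_n$ in a single $K_{p^\ast}$, contradicting $|\bigcap K_{p^\ast}|=\infty$. You instead observe that each individual $K_p$ is already nowhere dense (a one-line computation from $\bigcap[I,A]=I$), push the work into Proposition~\ref{bdedness} to bound $\{K_q:q\in V_p\}$ by a single $K_{p^\ast}$, and finish with Lindel\"ofness. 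Your route is more modular and makes the nowhere-dense step essentially trivial; the price is that you must first verify $\mathrm{add}(Q)=\omega$, and this is exactly where you spend the freeness hypothesis. The paper's direct construction, by contrast, never invokes freeness---it uses only the condition that each $\bigcap K_p$ is infinite---so it in fact establishes a marginally stronger statement than the one enunciated.
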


\begin{proof} Let $\mathcal{F}$ be a free filter on $\omega$ and $\mathcal{K}=\{K_p: p\in P\}$ be a $P$-ordered compact covering of $\mathcal{F}$. We first show that for each $p\in P$, there is a neighborhood $O$ of $p$ such that $\bigcup \{K_q: q\in O\}$ is nowhere dense in $2^\omega$. Fix $p\in P$. Assume, for a contradiction, that $\overline{\bigcup \{K_q: q\in O\}}$ has nonempty interior for each neighborhood $O$ of $p$. We pick a decreasing local base $\{O_n: n\in \omega\}$ at $p$ and define $C_n=\bigcup \{K_q: q\in O_n\}$ for each $n\in\omega$. Then, for each $n\in \omega$ there exist finite subsets $I_n$ and $A_n$ of $\omega$ such that $[I_n, A_n]\subset \overline{C_n}$. Without loss of generality, we could assume that $I_n\neq \emptyset$ and $\max I_{n+1}\geq \max I_n+n$ for each $n\in \omega$. Fix  $n\in \omega$. The finite set $I_n$ is an element in $\overline{C_n}$.  Then $U_n=[I_n,\{m: \max I_n <m\leq \max I_{n+1}\}]$ is a basic neighbor of $I_n$ in $2^{\omega}$. Hence, there exists a $p_n\in O_n$ and an element $G_n\in K_{p_n}$ such that $G_n\in U_n $. Note that $m\notin G_n$ for any $m$ with $\max I_n<m\leq \max I_{n+1}$. Hence if $m>\max I_0$, $m\notin \bigcap \{G_n: n\in\omega\}$, i.e., $\bigcap \{G_n: n\in\omega\}$ is finite. Since $\{O_n: n\in \omega\}$ is a decreasing local base at $p$, the sequence $\{p_n: n\in\omega\}$ converges to $p$. By the assumption, there is a $p^\ast\in P$ such that $p^\ast\geq p_n$ for each $n\in\omega$. Therefore, $G_n\in K_{p^\ast}$ for each $n\in\omega$. Then $\bigcap K_{p^\ast}\subset \bigcap \{G_n: n\in \omega\}$ is finite which is a contradiction.

Let $\mathcal{B}=\{B_n: n\in\omega\}$ be a countable base for $P$ and $\mathcal{H}$ be the collection of $\bigcup\{K_q: q\in B_n\}$ which is nowhere dense. By the result above, $\mathcal{F}=\bigcup \mathcal{H}$, hence $\mathcal{F}$ is a meager set. \end{proof}

We would need the famous characterization of meager filters due to Talagrand in \cite{Tal80}. A filter $\mathcal{F}$ on a countable set $X$ is meager subset of the powerset $\mathcal{P}(X)$ if and only if there exists a finite-to-one map $\phi: X\rightarrow \omega$ such that $\phi(F)$ has a finite complement in $\omega$ for any $F\in \mathcal{F}$.  A function from $X$ to $Y$ is said to be finite-to-one if $f^{-1}(y)$ is finite for each $y\in Y$.

\begin{thm}\label{P-Pn} Let $P$ be a directed set equipped with a second-countable topology in which  every convergent sequence is bounded. If the topological space $X$ has a $P$-base at $x$, then it has a countable Pytkeev$^\ast$-network at $x$.
\end{thm}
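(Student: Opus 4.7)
\medskip\noindent\textbf{Proof plan.} The strategy is to pass from a sequence accumulating at $x$ to a free filter on $\omega$, invoke Proposition~\ref{meager} to see it is meager, and then use Talagrand's characterization of meager filters to locate a network element. Fix a $P$-base $\{U_p : p \in P\}$ at $x$ and a countable base $\{B_n : n \in \omega\}$ for the topology on $P$. My candidate countable family $\mathcal{N}$ will be built from the $U_p$'s by means of $\{B_n\}$: pick a point $p_n \in B_n$ for each $n$, and take $\mathcal{N}$ to consist of a countable collection of combinations (finite intersections or unions) of the $U_{p_n}$'s, perhaps together with countable point-selections inside them. The precise combinatorial recipe is dictated by what is needed in the final matching step below.

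Given a neighborhood $O_x$ of $x$ and a sequence $(x_k)$ accumulating at $x$, we may assume $x_k \neq x$ for all $k$. Put $A_p := \{k \in \omega : x_k \in U_p\}$. By accumulation each $A_p$ is infinite, and because $\{U_p\}$ is a neighborhood base at $x$ in a Hausdorff space we have $\bigcap_{p \in P} A_p = \emptyset$; hence the filter $\mathcal{F}$ on $\omega$ generated by $\{A_p : p \in P\}$ is free. The sets $K_p := \{S \in 2^\omega : S \supseteq A_p\}$ are closed in $2^\omega$ (so compact), they increase in $p$, their union covers $\mathcal{F}$, and $\bigcap K_p = A_p$ is infinite. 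Intersecting with $\mathcal{F}$ produces a $P$-ordered compact covering of $\mathcal{F}$ with infinite intersection at every stage, and Proposition~\ref{meager} gives that $\mathcal{F}$ is meager. Talagrand's characterization then yields a finite-to-one map $\phi : \omega \to \omega$ with $\phi(A_p)$ cofinite in $\omega$ for every $p \in P$. Picking $p_0 \in P$ with $U_{p_0} \subseteq O_x$, the cofiniteness of $\phi(A_{p_0})$ provides, for cofinitely many $j$, a witness $k_j \in \phi^{-1}(j) \cap A_{p_0}$, so $\{x_{k_j}\}$ is an infinite set contained in $U_{p_0} \subseteq O_x$ that meets the sequence $(x_k)$ in infinitely many indices.

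The hard part is the matching step: the set $\{x_{k_j}\}$ just produced depends on $(x_k)$ and on the Talagrand map, whereas $\mathcal{N}$ must be chosen \emph{once and for all}. The main obstacle is therefore to construct $\mathcal{N}$ rich enough that for every sequence $(x_k)$ and every $O_x$ some pre-chosen $N \in \mathcal{N}$ already witnesses what the Talagrand argument produces. I expect this to exploit both the second-countable structure of $P$ (to keep $\mathcal{N}$ countable) and the hypothesis that every convergent sequence in $P$ is bounded, in order to force the Talagrand partition to respect the base $\{B_n\}$; this is parallel in spirit to how the nowhere-dense sets in the proof of Proposition~\ref{meager} were indexed by basic open sets of $P$. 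That is where I would invest the most effort.
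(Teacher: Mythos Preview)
Your overall architecture (pass to a free filter on $\omega$, apply Proposition~\ref{meager}, then Talagrand) matches the paper exactly, and your filter/compact-covering construction is correct. The gap is precisely where you located it: the definition of $\mathcal{N}$. Choosing a single representative $p_n\in B_n$ and working with the $U_{p_n}$'s (or their finite combinations) will not succeed, because for a given sequence and a given neighborhood $U_p\subseteq O_x$ the Talagrand argument ultimately forces you to use points of $P$ \emph{near} $p$, and nothing ties those to the preselected $p_n$'s. Note also that your final paragraph does not yet use Talagrand in a nontrivial way: observing that $\phi(A_{p_0})$ is cofinite and extracting $x_{k_j}\in U_{p_0}$ just reproves that the sequence accumulates at $x$; it does not produce a member of a fixed countable family.

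The missing idea is to take, for each basic open $B_m$, the \emph{full} intersection
\[
U_{B_m}\;=\;\bigcap_{p\in B_m} U_p,
\]
and set $\mathcal{N}=\{U_{B_m}:m\in\omega\}$. These need not be open, but a Pytkeev$^\ast$-network does not require that. Now fix $p$ with $U_p\subseteq O_x$, take a decreasing local base $\{O_k\}\subseteq\mathcal{B}$ at $p$, and suppose for contradiction that each $U_{O_k}$ contains only finitely many terms $x_n$, say with index set $J_k$. Talagrand's finite-to-one $\phi$ lets you pick $y_k\notin\phi(J_k)$ increasing in $k$; since each $i\in\phi^{-1}(y_k)$ satisfies $x_i\notin U_{O_k}=\bigcap_{q\in O_k}U_q$, there are finitely many witnesses in $O_k$, and directedness yields $p_k\in O_k$ with $x_i\notin U_{p_k}$ for all such $i$. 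Now $p_k\to p$, so by the hypothesis on $P$ the sequence $\{p_k\}$ has an upper bound $p^\ast$, whence $\phi(F(U_{p^\ast}))$ misses the infinite set $\{y_k\}$, contradicting Talagrand. This is exactly how the paper carries out the matching step you flagged.
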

\begin{proof}  Fix a countable basis $\mathcal{B}=\{B_m:m\in\omega\}$ for $P$. Let $\{U_p:p\in P\}$ be a $P$-ordered local base at $x$. Define $U_{B_m}=\bigcap \{U_p: p\in B_m\}$. We show that $\{U_{B_m}: m\in\omega\}$ is a s$^\ast$-network at $x$. Fix a sequence $\{x_n:n\in \omega\}$ which accumulates at $x$. It is sufficient to show that for every $p\in P$, there is an $m\in \omega$ such that $p\in B_m$ and $U_{B_m}$ contains infinitely many terms in the sequence $\{x_n: n\in\omega\}$.

 If $\{n : x_n=x\}$ is infinite, then any $B_m$ with $p\in B_m$ satisfies that $U_{B_m}$ contains infinite many terms in the sequence $\{x_n: n\in\omega\}$. Without loss of generality, we assume that $x_n\neq x$ for each $n\in\omega$. For each $p\in P$, define $F(U_p)=\{n : x_n\in U_p\}$; and then, let $\mathcal{F}$ be the filter generated by $\{F(U_p): p\in P\}$. Clearly $\mathcal{F}$ is a free filter. For each $p\in P$, define $K_p=\{F: F\in\mathcal{F} \text{ and } F\supset F(U_p)\}$. Then $K_p$ is compact and $\bigcap K_p=F(U_p)$ is infinite for each $p\in P$; also, $K_p\subseteq K_{p'}$ if $p\leq p'$.  So $\mathcal{F}$ is a free filter with a $P$-ordered compact covering which satisfies the requirements in  Lemma~\ref{meager}, hence  it is a meager set.

 \medskip
  By Talagrand's characterization of meager filters, there is a finite-to-one mapping $\phi: \omega\rightarrow \omega$ such that $\phi(F)$ has finite complement in $\omega$ for each $F\in \mathcal{F}$.  Fix $p\in P$. Lastly,  we show that there is a $B_m\in \mathcal{B}$ such that $p\in B_m$ and $U_{B_m}$ contains infinitely  terms in the sequence $\{x_n: n\in \omega\}$. Let $\{O_{k}: k\in \omega\}$ be a subcollection of $\mathcal{B}$ which is a decreasing local base at $p$.  Assume, for a contradiction, that $U_{O_k}$ is finite for each $k\in\omega$. Let $J_k=\{n: x_n\in U_{O_k}\}$. Because $\phi$ is finite-to-one, there is a $y_k\in \omega$ such that $\phi^{-1}(y_k)\neq\emptyset$ and  $J_k\cap \phi^{-1}(y_k)$ is finite for each $k\in \omega$; moreover, we could assume that $y_k<y_{k+1}$ for each $k\in \omega$. Because $P$ is a directed set and $\phi^{-1}(y_k)$ is finite for each $k\in\omega$, there exists a $p_k\in O_k$ such that $x_{i}\notin U_{p_k}$ for each $i\in \phi^{-1}(y_k)$, i.e., $F(U_{p_k})\cap\phi^{-1}(y_k)=\emptyset$ . Clearly, $\{p_k: k\in \omega\}$ is a sequence in $P$ which converges to $p$. Pick $p^\ast \in P$ such that $p^\ast\geq p$ and $p^\ast\geq p_k$ for each $k\in \omega$, hence $F(U_{p^\ast})$ is a subset of $F(U_{p_k})$ for each $k\in\omega$. Therefore, $\phi^{-1}(y_k)\cap F(U_{p^\ast})=\emptyset$ for each $k\in\omega$, i.e., $\{y_k: k\in\omega\}$ is in the complement of $\phi(F(U_{p^\ast}))$. This contradicts the finite-to-one property of the function $\phi$.  \end{proof}

\begin{cor}\label{str_past} Let $P$ be a directed set equipped with a second-countable topology in which  every convergent sequence in $P$ is bounded. If the topological space $X$ has a $P$-base, then it has strong Pytkeev$^\ast$ property.  \end{cor}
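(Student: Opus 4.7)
The plan is to observe that this corollary is essentially a pointwise-to-global unpacking of Theorem~\ref{P-Pn}, and therefore requires very little additional work. Recall the definitions: the space $X$ has a $P$-base precisely when it has a neighborhood $P$-base at each point $x \in X$, and $X$ has the strong Pytkeev$^\ast$ property precisely when it has the strong Pytkeev$^\ast$ property (i.e., a countable Pytkeev$^\ast$-network) at each point $x \in X$. So the corollary dissolves into a quantifier shuffle once the pointwise statement is in hand.

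Concretely, I would fix an arbitrary $x \in X$, extract from the $P$-base hypothesis a $P$-ordered local base $\{U_p : p \in P\}$ at $x$, and then invoke Theorem~\ref{P-Pn} with this particular $x$. The hypotheses of Theorem~\ref{P-Pn} on $P$ (directed, second-countable topology in which every convergent sequence is bounded) are inherited verbatim from the hypotheses of the corollary, so the application is immediate. The conclusion of Theorem~\ref{P-Pn} gives a countable Pytkeev$^\ast$-network at $x$. Since $x \in X$ was arbitrary, $X$ has the strong Pytkeev$^\ast$ property at every point, which is by definition the strong Pytkeev$^\ast$ property.

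There is no genuine obstacle to overcome here; the only thing to be careful about is making sure the reader sees that the hypotheses of the theorem are exactly what is needed, and that no uniformity in $x$ is being claimed or required (each point is treated independently with its own countable network). Accordingly, the proof should be a short paragraph citing Theorem~\ref{P-Pn} and remarking that the conclusion follows by applying it at each point.
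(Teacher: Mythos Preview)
Your proposal is correct and matches the paper's approach: the paper states Corollary~\ref{str_past} without proof, as it follows immediately from Theorem~\ref{P-Pn} by applying it at each point of $X$. Your unpacking of the definitions and the pointwise application is exactly the intended argument.
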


Applying this result to topological groups, we obtain the following corollary.
\begin{cor}\label{p_b_sPp} Let $P$ be a directed set equipped with a second-countable topology in which  every convergent sequence in $P$ is bounded.  If topological group $G$ with a $P$-base at the identity, then $G$ has the strong Pytkeev$^\ast$ property.
\end{cor}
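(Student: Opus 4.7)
The plan is to reduce this corollary directly to the preceding Corollary~\ref{str_past} by exploiting the homogeneity of topological groups. Corollary~\ref{str_past} requires that $X$ have a $P$-base, meaning a neighborhood $P$-base at every point. The hypothesis here only gives a $P$-base at the identity, so the whole task is to upgrade a base at $e$ to a base at every $g\in G$.

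First, I would recall that for any $g\in G$, the left-translation map $L_g:G\to G$ defined by $L_g(x)=gx$ is a homeomorphism, because multiplication is continuous and $L_g$ has continuous inverse $L_{g^{-1}}$. Therefore $L_g$ sends any neighborhood base at $e$ to a neighborhood base at $g$, preserving the indexing by $P$.

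Next, given the $P$-ordered neighborhood base $\{U_p:p\in P\}$ at $e$ (so that $U_{p'}\subseteq U_p$ whenever $p\leq p'$), I would define, for each $g\in G$, the family $\{gU_p:p\in P\}$. By the previous step this is a neighborhood base at $g$, and the containment $gU_{p'}\subseteq gU_p$ for $p\leq p'$ is immediate from $U_{p'}\subseteq U_p$. Thus $G$ has a neighborhood $P$-base at each of its points, i.e.\ $G$ has a $P$-base in the sense used in Corollary~\ref{str_past}.

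Finally, I would invoke Corollary~\ref{str_past} applied to the topological space $G$ (with the same poset $P$) to conclude that $G$ has the strong Pytkeev$^\ast$ property. There is no real obstacle in this argument; the only thing worth being careful about is the distinction between ``having a $P$-base at $e$'' and ``having a $P$-base,'' which is precisely the gap that homogeneity closes.
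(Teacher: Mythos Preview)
Your argument is correct and is exactly the intended one: the paper states the corollary with no explicit proof, relying on the earlier remark that a topological group has a $P$-base as soon as it has a neighborhood $P$-base at the identity, together with Corollary~\ref{str_past}. You have simply spelled out the homogeneity step that the paper leaves implicit.
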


It is proved in \cite{sak08} that $C_p(X)$ has the strong Pytkeev property if and only if $X$ is countable, i.e., $C_p(X)$ is metrizable. Hence, it is reasonable to investigate a function space with compact-open topology. It is straightforward to verify that $C_k(M)$ has a $\mathcal{K}(M)$-base for any second-countable space $M$, hence the strong Pytkeev$^\ast$ property by the corollary above. It is proved in   \cite{F20} that if $M$ is second-countable, $C_k(M)$ is countably tight, hence has the strong Pytkeev property. So we greatly generalized Theorem 1 in \cite{TZ09} which shows that $C_k(\omega^\omega)$ has the strong Pytkeev property.

\begin{thm} Let $M$ be any separable metric space. Then $C_k(M)$ has the strong Pytkeev property. \end{thm}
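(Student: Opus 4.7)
The plan is to assemble three ingredients that have already been developed or cited in the paper: Corollary~\ref{str_past} will supply the strong Pytkeev$^\ast$ property, the cited result of \cite{F20} will supply countable tightness of $C_k(M)$, and the characterization from \cite{Banakh2016} recalled at the start of Section~\ref{sPp} — namely, that a space has the strong Pytkeev property exactly when it is countably tight and has the strong Pytkeev$^\ast$ property — will glue the two halves together.

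First I would verify the hypotheses of Corollary~\ref{str_past} with $P=\mathcal{K}(M)$. Because $M$ is separable metric it is second-countable, so by the remark immediately preceding the theorem $C_k(M)$ carries a $\mathcal{K}(M)$-base at every point. Under the Hausdorff metric $\mathcal{K}(M)$ is itself a separable metric (hence second-countable) poset, directed by $\subseteq$ since the union of two compacta is compact. The remaining hypothesis of Corollary~\ref{str_past}, that every convergent sequence in $\mathcal{K}(M)$ is bounded above, is exactly the observation made in the paragraph preceding Corollary~\ref{M_bd}: if $K_n\to K$ in the Hausdorff metric then $K\cup\bigcup_n K_n$ is compact. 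Thus Corollary~\ref{str_past} delivers the strong Pytkeev$^\ast$ property for $C_k(M)$.

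Second, I would invoke \cite{F20} to conclude that $C_k(M)$ is countably tight whenever $M$ is second-countable, and then apply the \cite{Banakh2016} characterization to upgrade strong Pytkeev$^\ast$ plus countable tightness to the strong Pytkeev property itself.

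There is no real technical obstacle here; the heavy lifting was already carried out in Theorem~\ref{P-Pn}. The only point to check carefully is that the $\mathcal{K}(M)$-base of $C_k(M)$ described in Section~\ref{prep} really is monotonically indexed by $\mathcal{K}(M)$: the extra $\epsilon$-parameter in the basic sets $B(f,K,\epsilon)$ is absorbed either by the standard Tukey identification $\mathcal{K}(M)\times\omega =_T \mathcal{K}(M)$ in the non-compact case, or trivially in the compact case, since there $C_k(M)$ is metrizable and the strong Pytkeev property is automatic.
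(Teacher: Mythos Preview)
Your proposal is correct and follows essentially the same route as the paper: the paper's argument is the paragraph immediately preceding the theorem, which also combines the $\mathcal{K}(M)$-base of $C_k(M)$ with Corollary~\ref{str_past} to get the strong Pytkeev$^\ast$ property, cites \cite{F20} for countable tightness, and then uses the \cite{Banakh2016} characterization. Your treatment is in fact slightly more explicit than the paper's, since you spell out the verification of the hypotheses on $\mathcal{K}(M)$ and the absorption of the $\epsilon$-parameter, which the paper leaves as ``straightforward to verify.''
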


Next, we give an example of $C_k(X)$ which has a $\mathcal{K}(M)$-base for some separable metric space $M$, hence the strong Pytkeev$^\ast$ property, but it doesn't have the strong Pytkeev property.
\begin{ex} There is a topological group with a $\mathcal{K}(M)$-base for some separable metric space $M$, but not the strong Pytkeev property. \end{ex}

\begin{proof} Consider the function space $C_k(\omega_1)$.  Using the fact $\omega_1\leq_T\mathcal{K}(\mathbb{Q})$ (see \cite{GM17}), it is straightforward to verify that  $C_k(\omega_1)$ has a $\omega_1\times \omega$-base, hence a  $\mathcal{K}(\mathbb{Q})\times \omega$-base where $\mathbb{Q}$ is the space of rationals. Therefore, $C_k(\omega_1)$ has a $\mathcal{K}(\mathbb{Q})$-base since $\mathcal{K}(\mathbb{Q})=_T\mathcal{K}(\mathbb{Q})\times \omega$. Therefore, $C_k(\omega_1)$ has the strong Pytkeev$^\ast$ property by the corollary above. It is known (see \cite{BL17}) that the strong Pytkeev property implies countable tightness. Because  $C_k(\omega_1)$ is not countably tight (see Example 9 in \cite{F20}), it doesn't have the strong Pytkeev property. \end{proof}

It has been proven that any topological group with the strong Pytkeev property admits a quasi-$\mathfrak{G}$-base $\{U_f: f\in \Sigma\}$ of the identity, i.e., a neighborhood $\Sigma$-base of the identity for some sub-poset $\Sigma\subset \omega^\omega$. Motivated by Theorem~\ref{P-Pn}, it is natural to ask the following question.

\begin{qu} Let $G$ be a topological group with the strong Pytkeev property. Is it true that $G$ has a $\Sigma$-base where $\Sigma$ is some sub-poset in $\omega^\omega$ such that every convergent sequence is bounded? \end{qu}




Cascales and Orihuela in \cite{CO87} proved that the precompact subsets of any lcs with an $\omega^\omega$-base is metrizable. This result is generalized in \cite{Feng20} which proves that any precompact set of a topological group with a $P$-base is metrizable if $P$ satisfies calibre~$(\omega_1, \omega)$. A directed set $P$ satisfies calibre~$(\omega_1, \omega)$ if every uncountable set in $P$  contains a countable bounded subset. It is straightforward to verify the following result.

\begin{prop}\label{calw} Let $P$  be a directed set equipped with a second-countable topology in which every convergent sequence is bounded. Then $P$ has calibre~($\omega_1, \omega$).
\end{prop}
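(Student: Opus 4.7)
The plan is to unpack the definition of calibre~$(\omega_1,\omega)$ directly: given an arbitrary uncountable $A\subseteq P$, I must produce a countably infinite subset of $A$ that is bounded in $P$. Since the hypothesis hands out boundedness for free as soon as I have a convergent sequence, the entire problem reduces to finding a non-trivial convergent sequence inside $A$.

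First I would exploit second-countability. Every subspace of a second-countable space is second-countable, hence hereditarily Lindel\"of and hereditarily separable. Applied to $A$, this means the set $I\subseteq A$ of points that are isolated in $A$ is open and discrete in $A$, and therefore countable (an uncountable discrete Lindel\"of space does not exist). Because $A$ is uncountable, $A\setminus I$ is nonempty, so I may pick a point $p\in A$ that is not isolated in $A$.

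Next I would use first-countability (which follows from second-countability) to extract a convergent sequence. The point $p$ admits a decreasing countable local base $\{V_n:n\in\omega\}$ in $P$. Non-isolation of $p$ in $A$ means that for every $n$ the intersection $A\cap V_n$ contains points other than $p$, so I can inductively choose distinct $a_n\in (A\cap V_n)\setminus\{p,a_0,\ldots,a_{n-1}\}$. The resulting injective sequence $\{a_n:n\in\omega\}\subseteq A$ converges to $p$ in $P$.

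Finally I would invoke the standing hypothesis that every convergent sequence in $P$ is bounded. Thus $\{a_n:n\in\omega\}\cup\{p\}$, and in particular its subset $\{a_n:n\in\omega\}\subseteq A$, has an upper bound in $P$. This exhibits a countably infinite bounded subset of $A$, verifying calibre~$(\omega_1,\omega)$. The only step requiring any real thought is locating a non-isolated point of $A$ inside $A$; once hereditary Lindel\"ofness is invoked this is immediate, and no further combinatorial analysis of $P$ is needed.
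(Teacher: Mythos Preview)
Your proof is correct and follows essentially the same route as the paper: locate an accumulation point of the uncountable set, use first-countability to extract a convergent sequence inside the set, and invoke the hypothesis to bound it. Your version is simply more explicit than the paper's (which just asserts that an uncountable subset of a second-countable space has a cluster point and that a convergent sequence from $S$ can be extracted), spelling out the hereditary Lindel\"of argument for why a non-isolated point of $A$ must exist.
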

\begin{proof} Let $S$ be an uncountable subset of $P$. Since $P$ is second-countable, $S$ has a cluster point in $P$, namely, $p_\ast$. Then there is a sequence $\{p_n: n\in \omega\}\subset S$ which converges to $p_\ast$ which is bounded by the assumption. Hence $P$  has calibre~($\omega_1, \omega$). \end{proof}

It is known (see corollary 15.5 in \cite{KKL11}) that a barrelled lcs with an $\omega^\omega$-base is metrizable if and only if it doesn't contains a copy of $\varphi$ which is an $\aleph_0$-dimensional vector space endowed with the finest locally convex topology. Also, any compact subset of $\varphi$ is finite-dimensional. Motivated by these results, it is natural to investigate the class of lcs in which every infinite-dimensional subspace contains an infinite-dimensional compact metrizable subset. In \cite{BKP20}, one of the main results is that every uncountably-dimensional lcs with an $\omega^\omega$-base contains an infinite-dimensional compact metrizable subset. Hence $C_k(X)$ satisfies this property if $X$ is an infinite Tychonoff space containing a compact resolution that swallows the compact sets \cite[Therem 2]{FK13}. Next, with the help of Theorem~\ref{P-Pn}, we greatly expand this class of lcs by showing that every uncountably-dimensional lcs with a $P$-base is also in this class if $P$ is a poset equipped with a second-countable topology such that every convergent sequence in $P$ is bounded.

We start with some definitions needed. Let $\kappa, \lambda$ be cardinals. We say that a space $X$ is $(\kappa, \lambda)_p$-equiconvergent  at a point $x\in X$ if for any indexed family $\{x_\alpha: \alpha<\kappa\}$ of sequences converging to $x$, there is a $\lambda$-sized subset $\Lambda$ such that for each neighborhood $O_x$ of $x$ there is an $n\in\omega$ such that $\{\alpha\in \Lambda: x_\alpha(n)\notin O_x\}$ is finite . We say $X$ is $(\kappa, \lambda)_p$-equiconvergent if it is $(\kappa, \lambda)_p$-equiconvergent at any point in $X$.

A lcs $E$ is said to have $(\kappa, \lambda)$-tall bornology if every $\kappa$-sized subset $A$ contains a $\lambda$-sized bounded subset. It is known (see \cite{BKP20}) that any lcs which is $(\kappa, \lambda)_p$-equicovergent has $(\kappa, \lambda)$-tall bornology. The authors in \cite{BKP20} also prove that any topological space with countable cs$^{\bullet}$-network at a point $x$ is $(\omega_1,\omega)_p$-equiconvergent at $x$.  A family $\mathcal{N}$ of subsets of $X$ is said to be a cs$^\bullet$-network at a point $x\in X$ if for any neighborhood $O_x$ of $x$ and any sequence $\{x_n: n\in \omega\}$ converging to $x$ there exists $N\in \mathcal{N}$ such that $N\subset O_x$ and $N$ contains some point $x_n$ in the sequence. Clearly any countable Pytkeev$^\ast$ network at a point $x$ is also a countable cs$^\bullet$-network at $x$. Hence any space with the strong Pytkeev$^\ast$ property has a countable cs$^\bullet$-network at each point in the space.

It is proved \cite{BKP20} that for any lcs $E$ each compact subset of $E$ has finite topological dimension if and only if each bounded linearly independent subset of $E$ is finite. We use this result to prove the following theorem.

\begin{thm} Let $P$ be a directed set equipped with a second-countable topology in which  every convergent sequence in $P$ is bounded. Every uncountably-dimensional lcs $E$ with a $P$-base contains an infinite-dimensional metrizable compact subspace.
\end{thm}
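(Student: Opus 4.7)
The plan is to chain together several previously established results. Since the hypothesis on $P$ is exactly that of Theorem~\ref{P-Pn} and Corollary~\ref{str_past}, the lcs $E$ has the strong Pytkeev$^\ast$ property; in particular, at every point of $E$ there is a countable Pytkeev$^\ast$-network, which (as noted just before the theorem) is automatically a countable cs$^\bullet$-network. Invoking the result from \cite{BKP20} that a space with a countable cs$^\bullet$-network at a point is $(\omega_1, \omega)_p$-equiconvergent at that point, we conclude that $E$ is $(\omega_1, \omega)_p$-equiconvergent, and hence has $(\omega_1,\omega)$-tall bornology.

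Next, using that $E$ is uncountably-dimensional, I would select an $\omega_1$-sized linearly independent subset $A \subseteq E$ (for example, an uncountable sub-collection of a Hamel basis). The $(\omega_1,\omega)$-tall bornology of $E$ produces an infinite bounded subset $B \subseteq A$, and $B$ remains linearly independent as a subset of $A$. Applying the characterization from \cite{BKP20} that each compact subset of a lcs has finite topological dimension if and only if every bounded linearly independent subset is finite, the existence of $B$ yields a compact subspace $K \subseteq E$ of infinite topological dimension.

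Finally, I would upgrade $K$ to a \emph{metrizable} compact subspace. By Proposition~\ref{calw}, $P$ has calibre $(\omega_1,\omega)$. Since $E$, viewed as a topological group, carries a $P$-base, the result of \cite{Feng20} quoted in the paragraph preceding Proposition~\ref{calw} implies that every precompact subset of $E$ is metrizable. In particular, $K$ is metrizable, and so $K$ is the desired infinite-dimensional metrizable compact subspace of $E$.

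The argument is essentially an assembly of prior results, so no single step requires heavy work; the main conceptual point — and the most delicate bookkeeping — is verifying that the passage ``countable Pytkeev$^\ast$-network $\Rightarrow$ countable cs$^\bullet$-network $\Rightarrow$ $(\omega_1,\omega)_p$-equiconvergent $\Rightarrow$ $(\omega_1,\omega)$-tall bornology'' really applies to our $E$, and that the $\omega_1$-sized linearly independent set extracted from the uncountable dimension hypothesis truly feeds into the tall bornology property to yield a \emph{linearly independent} countable bounded set (so that the \cite{BKP20} dichotomy can be invoked). Once this is checked, the metrizability of $K$ follows immediately from the calibre property provided by Proposition~\ref{calw}.
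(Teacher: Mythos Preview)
Your proposal is correct and follows essentially the same chain of implications as the paper's own proof: strong Pytkeev$^\ast$ property $\Rightarrow$ countable cs$^\bullet$-network $\Rightarrow$ $(\omega_1,\omega)_p$-equiconvergent $\Rightarrow$ $(\omega_1,\omega)$-tall bornology, then an uncountable linearly independent set yields an infinite bounded linearly independent set, whence an infinite-dimensional compact set via the \cite{BKP20} dichotomy, and finally metrizability from Proposition~\ref{calw} and \cite{Feng20}. The only cosmetic difference is that the paper cites Corollary~\ref{p_b_sPp} (the topological-group version) rather than Corollary~\ref{str_past}, but since an lcs is a topological group either works.
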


\begin{proof} Let $E$ be a lcs with a $P$-base for some directed set equipped with second-countable topology in which  every convergent sequence in $P$ is bounded. By Corollary~\ref{p_b_sPp}, $E$ has the strong Pytkeev$^\ast$ property; hence it has cs$^\bullet$-network at each point at any $x\in E$. Therefore,  $E$ is $(\omega_1, \omega)_p$-equiconvergent. By Proposition 3.3 in \cite{BKP20}, $E$ has $(\omega_1, \omega)$-tall bornology, i.e., every uncountable set in $E$ contains an infinite bounded set. Let $H$ be an uncountable Hamel basis for $E$, then $H$ contains an infinite bounded linearly independent set. Hence $E$ contains an infinite-dimensional compact subspace by Theorem 2.1 in \cite{BKP20} (the result mentioned above). By Lemma~\ref{calw}, $P$ has calibre~$(\omega_1, \omega)$. Hence any such compact subspace in $E$ is metrizable by Theorem 5.4 in \cite{Feng20}.    \end{proof}

Applying the theorem above, we obtain the following examples.

\begin{ex} Every uncountable-dimensional subspace of $C_k(\mathbb{Q})$ contains an infinite-dimensional metrizable compact subspace.  \end{ex}

Since the function space $C_k(\omega_1)$ has a $\mathcal{K}(\mathbb{Q})$-base, we get the following example.

\begin{ex} Every uncountable-dimensional subspace of $C_k(\omega_1)$ contains an infinite-dimensional metrizable compact subspace.  \end{ex}

\textbf{Acknowledgement.} The authors would like to express their gratitude to the referee for all
his/her valuable comments and suggestions which lead to the improvements of the paper.

\begin{bibdiv}

\def\cprime{$'$}

\begin{biblist}

  \bib{Banakh2019}{article}{
   author={Banakh, T.},
   title={Topological spaces with an $\omega^{\omega}$-base},
   journal={Dissertationes Math.},
   volume={538},
   date={2019},
   pages={141},
   issn={0012-3862},
   review={\MR{3942223}},
   doi={10.4064/dm762-4-2018},
}

  \bib{Banakh2016}{article}{
   author={Banakh, T.},
   title={The strong Pytkeev$^\ast$ property of topological spaces},
   journal={arxiv.org/abs/1607.03599v3},
}

 \bib{BKP20}{article}{
   author={Banakh, T.},
   author={K\c{a}kol, J.},
   author={Sch${\ddot{\text{u}}}$rz, J. P.}
   title={{$\omega^{\omega}$}-Base and Infinite-Dimensional Compact Sets in Locally Convex Spaces},
   Journal = {Rev. Mat. Complut.},
   date = {2021}

}

  \bib{BL18}{article}{
   author={Banakh, T.},
   author={Leiderman, A.}
   title={$\omega^\omega$-dominated function spaces and $\omega^\omega$-bases in free objects of topological algebra},
   journal={Topology and Appl.},
   volume={241},
   date={2018},
   pages={203--241},
}

 \bib{BL17}{article}{
title = {The strong Pytkeev property in topological spaces},
journal = {Topology and its Applications},
volume = {227},
pages = {10-29},
year = {2017},
note = {Special issue in memory of Alex Chigogidze},
issn = {0166-8641},
author = {T. Banakh and A. Leiderman},

}

 \bib{CKS03}{article}{
   author={Cascales, B.},
   author={K\c{a}kol, J.},
   author = {Saxon, S.}
   title={Weight of precompact sets and tightness},
   journal={J. Math. Anal. Appl.},
   volume={269},
   date={2002},
   pages={500--518},
}

 \bib{CO87}{article}{
   author={Cascales, B.},
   author={Orihuela, J.}
   title={On compactness in locally convex spaces},
   journal={Math. Z.},
   volume={195},
   date={1987},
   pages={365--381},
}

  \bib{Ch74}{book}{
   author={J.P.R. Christensen},
   title={Topology and Borel Structure},
   publisher={North-Holland, Amsterdam-London; American Elsevier, New York},
   date={1974},
}
  \bib{DF20}{article}{
   author={Dow, A.},
   author = {Feng, Z.}
   title={Compact spaces with a $P$-base},
   journal={Indagationes Mathematicae},
   date={2021},
   doi={10.1016/j.indag.2021.04.002},
}

\bib{Feng20}{article}{
    AUTHOR = {Feng, Z.},
     TITLE = {$P$-bases and topological groups},
   JOURNAL = {arxiv.org/abs/2010.08004},
}

\bib{F20}{article}{
    AUTHOR = {Ferrando, J. C.},
     TITLE = {Descriptive topology for analysts},
   JOURNAL = {RACSAM},
      YEAR = {2020},
     PAGES = {114:107},
       DOI = {10.1007/s13398-020-00837-z},
}
\bib{FK13}{article}{
     author = {Ferrando, J.C.}
     author = {K\c{a}kol, J.}

     TITLE = {On precompact sets in spaces $C_c(X)$},
   JOURNAL = {Georgian Math. J. },
    VOLUME = {20},
      YEAR = {2013},
     PAGES = {247-254},
}

\bib{FKLS06}{article}{
     author = {Ferrando, J.C.}
     author = {K\c{a}kol, J.}
     author = {L\'{o}pez-Pellicer, M.}
     author = {Saxon, S.A.}

     TITLE = {Tightness and distinguished Frechet Spaces},
   JOURNAL = {J. Math. Anal. Appl.},
    VOLUME = {324},
      YEAR = {2006},
     PAGES = {862-881},
}

\bib{FKL16}{article}{
title = {Spaces $C(X)$ with ordered bases},
journal = {Topology and its Applications},
volume = {208},
pages = {30-39},
year = {2016},
issn = {0166-8641},
doi = {https://doi.org/10.1016/j.topol.2016.05.006},

author = {Ferrando, J.C.}
author = {K\c{a}kol, J.}
author = {L\'{o}pez-Pellicer, M.}

}

\bib{GKL15}{article}{
   author={Gabriyelyan, S.}
   author={K\c{a}kol, J.}
   author={Leiderman, A.G.},
   title={On topological groups with a small base and metrizability},
   journal={Fund. Math.},
   volume={229},
   date={2015},
   number={2},
   pages={129--158},
   doi={10.4064/fm229-2-3},
}

 \bib{GM16}{article}{
   author={Gartside, P.},
   author={Mamatelashvili, A.}
   title={The Tukey order on compact subsets of separable metric spaces},
   journal={J. Symb. Log.},
   number={1},
   volume={81},
   date={2016},
   pages={18--200},
   issn={0022-4812},
   review={\MR{3471135}},
   doi={10.1017/jsl.2015.49},
}

 \bib{GM17}{article}{
   author={Gartside, P.},
   author={Mamatelashvili, A.}
   title={Tukey order, calibres and the rationals},
   journal={Ann. Pure Appl. Logic},
   number={1},
   volume={172},
   date={2021},
}

  \bib{KKL11}{book}{
   author={J. K\c{a}kol},
   author = {W. Kubi\'{s}}
   author = {M. L\'{o}pez-Pellicer}
   title={Descriptive Topology in Selected Topics of Functional Analysis},
   series={Developments in Mathematics},
   publisher={Springer},
   date={2011},
}

\bib{LPT17}{article}{
   author={Leiderman, A.G.},
   author={Pestov, V.}
   author={Tomita, A.H.},
   title={On topological groups admitting a base at the identity indexed by $\omega^\omega$},
   journal={Fund. Math.},
   volume={238},
   date={2017},
   pages={79--100},
}

\bib{sak08}{article}{
title = {Function spaces with a countable cs$^\ast$-network at a point},
journal = {Topology and its Applications},
volume = {156},
number = {1},
pages = {117-123},
year = {2008},
issn = {0166-8641},
doi = {https://doi.org/10.1016/j.topol.2007.10.012},
author = {Masami Sakai},
}

 \bib{Tal80}{article}{
   author={Talagrand, M.},
   title={ Compacts de fonctions measurables et filtres non measurables},
   journal={Studia. Math.},
   number={1},
   volume={67},
   date={1980},
   pages={13--43},
}

 \bib{TZ09}{article}{
   author={Tsaban,B.},
   author={Zdomskyy, L.},
   title={On the Pytkeev property in spaces of continuous functions. II},
   journal={Houston J. Math.},
   volume={35},
   date={2009},
   pages={563--571},
}

\bib{Tuk40}{book}{
   author={Tukey, John W.},
   title={Convergence and Uniformity in Topology},
   series={Annals of Mathematics Studies, no. 2},
   publisher={Princeton University Press, Princeton, N. J.},
   date={1940},
   pages={ix+90},
   review={\MR{0002515}},
}

\end{biblist}
\end{bibdiv}

\end{document}